\title{On the \texorpdfstring{$c$-concavity}{c-concavity} with Respect to the Quadratic Cost on a Manifold}
\author{Federico Glaudo\thanks{Scuola Normale Superiore, \url{federico.glaudo@sns.it}.}}
\begin{document}
	\maketitle	
\begin{abstract}
Pushing a little forward an approach proposed by Villani~\cite{Villani08}, we are going to prove that in the Riemannian setting the condition $\nabla^2 f<\metric$ implies that $f$ is $c$-concave with respect to the quadratic cost as soon as it has a sufficiently small $C^1$-norm. From this, we deduce a sufficient condition for the optimality of transport maps.
\blfootnote{\textit{MSC-2010:} 49K, 49Q20, 53C21.}
\blfootnote{\textit{Keywords:} optimal transport, Riemannian manifold, c-concavity, McCann theorem.}
\end{abstract}

\section{Introduction}
Let us briefly recall the optimal transport problem on $\R^n$ with quadratic cost $c(x,y) = \frac12 d^2(x,y)$.
Given two probability measures $\mu, \nu\in\mathcal P_2(\R^n)$ we want to find a map $T:\R^n\to\R^n$ such that $T_\#\mu = \nu$ and the quantity $\frac12\int \abs{x-T(x)}^2\de\mu(x)$ is minimized.

It is very well-known (see \cite{Brenier91}) that, as soon as $\mu$ and $\nu$ are absolutely continuous with respect to the Lebesgue measure, an optimal map always exists. Moreover a map such that $T_\#\mu = \nu$ is optimal if and only if $T$ is the gradient of a convex function. This striking characterization is a peculiarity of the quadratic cost.

The optimal transport problem has been thoroughly studied in the last three decades (see the monographs \cite{Villani08,AmbrosioGigliSavare08,Santambrogio15}) and for instance a fruitful generalization was the replacement of the space $\R^n$ with a generic Riemannian manifold. In the Riemannian setting the turning point of the theory is given by McCann's Theorem~\cite{McCann01}, that generalizes the theorem of Brenier to a general compact Riemannian manifold. Denoting with $M$ the manifold, McCann proves that there exists an optimal map $T:M\to M$ and furthermore the map $T$ can be written as $\exp(-\nabla f)$, where $f:M\to\R$ is a suitable $c$-concave function.
A $c$-concave function, on a general space $X$ endowed with a symmetric cost function $c:X\times X\to\R$,  is a function $f:X\to\R$ such that there exists a family of $(x_i, \alpha_i)_{i\in I}\subseteq X\times\R$ such that it holds
\begin{equation*}
    f(x) = \inf_{i\in I} c(x, y_i)-\alpha_i \fullstop
\end{equation*}
Of course this is very similar to the definition of concave functions as the infimum of linear functions. Indeed in the Euclidean space, with cost $c=\frac12\abs{x-y}^2$, a function $f$ is $c$-concave if and only if $\frac12x^2-f(x)$ is convex. It is therefore very natural to ask ourselves whether this equivalence can be proven also in the Riemannian setting. The natural generalization, on a compact Riemannian manifold $(M,\metric)$, should look like:
\begin{falsetheorem*}[Na\"{i}ve Statement]
    A function $f:M\to\R$ is $c$-concave if and only if $\nabla^2 f\le \metric$.
\end{falsetheorem*}
Though, this statement does not take into account the fact that the manifold is curved and indeed it turns out being false. We show a counterexample in the last section of this document.

On the other hand, the only result known in literature that goes in this direction is the following, stated by Villani in his monograph~\cite{Villani08} as Theorem 13.5:
\begin{theorem*}[Villani]
    Let $M$ be a compact Riemannian manifold. Then, there is $\eps > 0$ such that any function $\psi\in C^2_c(M)$ satisfying
    $\norm{\psi}_{C^2_b}\le\eps$ is $d^2/2$-concave.
\end{theorem*}
It is immediately clear that such a statement seems not optimal as instead of $\nabla^2 f\le \metric$ it asks that $\nabla^2 f$ is very small. Our main goal is proving a \emph{true} version of the false theorem stated above. The approach is exactly the same as the one adopted by Villani, but instead of using compactness arguments, we deduce explicit inequalities that depend on natural quantities associated to the manifold (curvature, injectivity radius and diameter). 
The exact statement of our main theorem is:
\begin{theorem}[Main Theorem]\label{thm:main_thm}
    Let $(M, \metric)$ be a compact Riemannian manifold with sectional curvature bounded from above by $K\ge 0$. Then there exists a constant $C_*=C_*(\injradius(M), K,\diam(M))>0$ such that, for any $\eps>0$, if $f\in C^2(M, \R)$ is a function with 
    \begin{equation*}
        \norm{\nabla f}_\infty \le \min\left(\frac{\eps}{3K\diam(M)}, C_*\right)
        \hspace{1.5em}\text{and}\hspace{1.5em}
        \nabla^2 f\le (1-\eps)\metric
    \end{equation*}
    then $f$ is $c$-concave.
\end{theorem}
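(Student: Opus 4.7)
The strategy is, for each $x_0 \in M$, to exhibit a ``touching'' pair $(y_0, \alpha_0)$ such that
\begin{equation*}
    f(z) \;\le\; c(z, y_0) - \alpha_0 \qquad \text{for every } z \in M,
\end{equation*}
with equality at $z = x_0$; once this is done for every $x_0$, the family $\{(y_0, \alpha_0)\}_{x_0 \in M}$ witnesses the $c$-concavity of $f$. The natural choice is $y_0 := \exp_{x_0}(-\nabla f(x_0))$ and $\alpha_0 := c(x_0, y_0) - f(x_0)$; provided $C_* < \injradius(M)$, one has $d(x_0, y_0) = \abs{\nabla f(x_0)} < \injradius(M)$, hence $\nabla_z c(\cdot, y_0)|_{x_0} = -\exp_{x_0}^{-1}(y_0) = \nabla f(x_0)$. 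Consequently the auxiliary function $\tilde\psi(z) := c(z, y_0) - f(z) - \alpha_0$ satisfies $\tilde\psi(x_0) = 0$ and $\nabla\tilde\psi(x_0) = 0$, and the theorem reduces to proving $\tilde\psi \ge 0$ globally.

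I would then split according to the size of $L := d(x_0, z)$. In the \emph{far regime}, the triangle inequality $d(z, y_0) \ge L - \norm{\nabla f}_\infty$ and the $C^1$ bound on $f$ yield directly
\begin{equation*}
    \tilde\psi(z) \;\ge\; \tfrac{1}{2}\bigl(L - \norm{\nabla f}_\infty\bigr)^2 - \tfrac{1}{2}\norm{\nabla f}_\infty^2 - \norm{\nabla f}_\infty\, L \;=\; \tfrac{L}{2}\bigl(L - 4\,\norm{\nabla f}_\infty\bigr),
\end{equation*}
which is non-negative as soon as $L \ge 4\norm{\nabla f}_\infty$. In the complementary \emph{near regime} $L \le 4\norm{\nabla f}_\infty$, parametrise the unit-speed minimising geodesic $\gamma \colon [0, L] \to M$ from $x_0$ to $z$, and consider $\psi(t) := \tilde\psi(\gamma(t))$. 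Then $\psi(0) = \psi'(0) = 0$, and the Hessian comparison applied to the squared distance from $y_0$ (using $\sec \le K$) gives, as long as $\gamma(t)$ lies outside the cut locus of $y_0$,
\begin{equation*}
    \psi''(t) \;\ge\; s(t)\sqrt{K}\,\cot\!\bigl(s(t)\sqrt{K}\bigr) + \eps - 1, \qquad s(t) := d(\gamma(t), y_0) \le \norm{\nabla f}_\infty + t \le 5\,\norm{\nabla f}_\infty.
\end{equation*}
Using the expansion $x\cot x = 1 - x^2/3 + O(x^4)$, the right-hand side is non-negative provided $K s(t)^2 \le c\,\eps$ for a suitable universal constant $c$; the numerical hypothesis $\norm{\nabla f}_\infty \le \eps/(3K\diam(M))$ is precisely tuned to yield this (the algebra reduces to $25 c K \norm{\nabla f}_\infty^2 \le \eps$, automatic whenever $\eps$ is at most a universal constant times $K\diam(M)^2$), while the remaining range of $\eps$ is absorbed by the cap $\norm{\nabla f}_\infty \le C_*$. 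Convexity of $\psi$ on $[0, L]$ combined with $\psi'(0) = 0$ then forces $\psi(L) \ge 0$, which is the desired inequality.

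The main technical subtlety I expect is the applicability of the Hessian comparison throughout the near regime: we need $\gamma([0, L])$ to avoid the cut locus of $y_0$, which is guaranteed by the bound $s(t) \le 5\norm{\nabla f}_\infty \le 5 C_*$ once $C_*$ is chosen smaller than a suitable fraction of $\min(\injradius(M), \pi/\sqrt{K})$. Apart from that, the proof amounts to careful bookkeeping of the numerical constants --- in particular the factor $5$ coming from $s(t) \le \norm{\nabla f}_\infty + L$ and the matching threshold $L = 4\norm{\nabla f}_\infty$ between the two regimes --- which together fix the explicit form of $C_*(\injradius(M), K, \diam(M))$.
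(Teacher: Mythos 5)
Your proof is correct, and its skeleton coincides with the paper's (\cref{prop:tech_version}): take $y_0=\exp_{x_0}(-\nabla f(x_0))$, verify the first-order condition, and treat a far regime by the triangle inequality and a near regime by the Hessian comparison. The execution differs in two genuine ways. First, the paper splits at $d(x^*,y)=\delta$ with $\delta=\sqrt{2\diam(M)\norm{\nabla f}_\infty+\norm{\nabla f}_\infty^2}$ and controls the far regime with the global oscillation $\sup f-\inf f\le\diam(M)\norm{\nabla f}_\infty$, while you split at $d(x_0,z)=4\norm{\nabla f}_\infty$ and use the local Lipschitz bound $\abs{f(z)-f(x_0)}\le\norm{\nabla f}_\infty\, d(x_0,z)$. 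Second, in the near regime the paper proves $\nabla^2 h\ge 0$ on the whole ball $B(x^*,\delta)$ and must therefore invoke the convexity-radius lower bound (\cref{thm:convexity_radius}) to turn the critical point into a minimum, whereas you only need convexity of $\tilde\psi$ along a single minimizing geodesic from $x_0$ to $z$ (which exists by Hopf--Rinow), so that lemma disappears from your argument. Your version also has a quantitative payoff: since your near regime has radius $O(\norm{\nabla f}_\infty)$ rather than $O(\sqrt{\diam(M)\norm{\nabla f}_\infty})$, the Hessian deficit to be absorbed is $O(K\norm{\nabla f}_\infty^2)$ instead of $K\delta^2\approx 2K\diam(M)\norm{\nabla f}_\infty$; in particular your argument proves the statement under the weaker gradient bound $\norm{\nabla f}_\infty\lesssim\sqrt{\eps/K}$, which is directly relevant to the question raised after \cref{thm:main_thm}. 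One small simplification: the chain $K\norm{\nabla f}_\infty^2\le\norm{\nabla f}_\infty\cdot\frac{\eps}{3\diam(M)}\le\frac{C_*\,\eps}{3\diam(M)}$ settles your constraint $25cK\norm{\nabla f}_\infty^2\le\eps$ in one line once $C_*$ is at most a fixed multiple of $\diam(M)$, so the case distinction on the size of $\eps$ is unnecessary.
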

The way this theorem should be intended is that if $\metric-\nabla^2 f\lesssim \norm{\nabla f}_{\infty}\cdot\metric$, then $f$ is $c$-concave.

As a consequence of the main theorem, we give a sufficient condition for a map $T:M\to M$ to be optimal from $\mu$ to $T_\#\mu$. 
The need for such an optimality condition arose naturally while trying to simplify the approach to the random matching problem proposed in \cite{Ambrosio-Stra-Trevisan16}. The simplified approach will appear in the forthcoming paper  \cite{Ambrosio-Glaudo18} coauthored with Ambrosio.

\paragraph{Acknowledgment.} The author warmly thanks professor L.\ Ambrosio for constructive criticism of the manuscript and for several comments and suggestions.

\section{Notations}
Given a compact Riemannian manifold $(M, \metric)$, let us fix the following notation:
\begin{itemize}
    \item Let $d:M\times M\to\co{0}{\infty}$ the Riemannian distance on $M$ and $\exp:TM\to M$ the exponential map.
    \item Let $K$ be the supremum of the positive part of the sectional curvature.
    \item Let $\injradius(M)>0$ be the injectivity radius of the exponential map on $M$.
    \item Let $\diam(M)$ be the diameter of the manifold $M$. We will use that for any Lipschitz function $f:M\to\R$, it holds
    \begin{equation*}
        \sup f - \inf f \le \diam(M) \norm{\nabla f}_\infty \fullstop
    \end{equation*}
    The constant $\diam(M)$ is sharp, indeed if $f(\emptyparam) = d(\emptyparam, \bar x)$ equality can be attained for some $\bar x\in M$.
\end{itemize}

Throughout this notes we will implicitly assume that the cost $c$ is given by $c(x,y)=\frac12d^2(x,y)$. Hence, when we say that a function $f$ is $c$-concave we mean that it can be written as the infimum of functions of the form $x\mapsto\frac12 d^2(x,x_i)-a_i$, where $x_i\in X$, $a_i\in\R$ and $i$ varies in a suitable set of indexes.

\section{Main Theorem}
Exploiting the technical results that we will prove later, the proof of the main theorem becomes straight-forward.
\begin{proof}[Proof of the Main Theorem]
    This is an easy consequence of \cref{prop:tech_version}.
\end{proof}
\begin{remark}\label{rem:convex_easy}
    As said in the introduction, the previous theorem should be seen as a generalization of the trivial fact that if a function $f:\R^n\to\R$ satisfies $\hess f\le \mathds{1}$ then $x\mapsto \frac12 x^2-f$ is convex (indeed on the Euclidean space this convexity is equivalent to the $c$-concavity of $f$).
    
    For a couple of reasons such a statement is harder to prove on a Riemannian manifold. First of all the exponential map need not to be injective globally and that is why we need an additional bound on the gradient. Furthermore, in positive curvature, the Hessian of the square of the distance can be strictly smaller than the metric and consequently we will need to ask a stricter condition on the Hessian of the function itself (i.e. we need $\nabla^2 f$ \emph{strictly} smaller than the metric).
\end{remark}
\begin{remark}
    The theorem can be easily extended to the case where $f$ is compactly supported (and the manifold is non-compact).
\end{remark}
\begin{remark}
    Under the hypothesis that $\norm{\nabla f}_\infty$ is small enough, the requirement $\nabla^2 f<(1+\eps)\metric$ is necessary for $f$ to be $c$-concave. This is a byproduct of the proof of \cref{prop:tech_version}. Indeed, using the notation of that proof, if the global oscillation of $f$ is small enough, the choice of $x^*=\exp_x(-\nabla f(x))$ is mandatory (as we will see in the last section). Therefore it must hold $\nabla^2 h(x)\ge 0$, and that implies the desired bound on the Hessian. 
\end{remark}
\begin{question*}
    Is the assumption $\norm{\nabla f}_\infty=\bigo(\eps)$ optimal?
\end{question*}

The following corollary, deeply linked to McCann's Theorem (see \cite{McCann01}), is the reason behind our investigation of $c$-concave functions.
\begin{corollary}[Optimality Condition]
    Let $M$ be a compact Riemannian manifold. If $f\in C^2(M)$ satisfies the requirements of \cref{thm:main_thm} then, for any probability measure $\mu\in\mathcal{P}(M)$, the map $T=exp(-\nabla f)$ is optimal from $\mu$ to $T_\#\mu$ with respect to the quadratic cost $c=\frac12 d^2$.
\end{corollary}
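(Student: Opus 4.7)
The plan is to combine the Main Theorem (which delivers $c$-concavity of $f$) with the standard characterization of optimal transport plans as those concentrated on the $c$-superdifferential of a $c$-concave potential.

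First, by the Main Theorem the hypotheses make $f$ a $c$-concave function, so it admits a $c$-transform $f^c$ with $f(x) + f^c(y) \le c(x,y)$ everywhere on $M\times M$ and equality exactly on the $c$-superdifferential $\partial^c f$.

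Next, I would check that $(x, T(x)) \in \partial^c f$ for every $x\in M$. Since $f$ is $c$-concave and $M$ is compact, there exists some $y^* \in \partial^c f(x)$, and this $y^*$ realizes $\inf_{x'}[c(x', y^*) - f(x')] = c(x, y^*) - f(x)$ at $x' = x$. The first-order condition at this interior minimum yields
\begin{equation*}
    \nabla f(x) \;=\; \nabla_{x} c(x, y^*) \;=\; -\exp_x^{-1}(y^*),
\end{equation*}
where the second identity is the familiar formula for the gradient of the squared distance, valid away from the cut locus. The bound on $\norm{\nabla f}_\infty$ built into the Main Theorem is precisely what confines $y^*$ within the injectivity domain of $\exp_x$, legitimizing this identity; inverting it gives $y^* = \exp_x(-\nabla f(x)) = T(x)$.

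Finally, Kantorovich weak duality closes the argument: for any admissible plan $\tilde\gamma$ between $\mu$ and $T_\#\mu$,
\begin{equation*}
    \int c\,\de\tilde\gamma \;\ge\; \int [f(x) + f^c(y)]\,\de\tilde\gamma(x,y) \;=\; \int f\,\de\mu + \int f^c\,\de(T_\#\mu),
\end{equation*}
while the particular plan $(\mathrm{id}\times T)_\#\mu$ achieves equality thanks to the previous step. Hence that plan is optimal, and being concentrated on the graph of $T$ it witnesses the optimality of the map $T$ itself. The only substantive step is the middle one---pinpointing $y^* = T(x)$ from the first-order condition---and the gradient bound is tailored precisely to prevent cut-locus interference there.
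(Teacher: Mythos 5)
Your proof is correct, but it takes a genuinely different route from the one in the paper. Both arguments hinge on the same fact, established in the proof of \cref{prop:tech_version}: for every $x$ the point $x$ minimizes $y\mapsto \tfrac12 d^2(T(x),y)-f(y)$, i.e.\ $(x,T(x))$ belongs to the $c$-superdifferential of $f$. The paper then sums this inequality over a cyclic permutation of finitely many points, so that the $f$-terms cancel and the graph of $T$ is seen to be $c$-cyclically monotone, and invokes the equivalence between $c$-monotonicity and optimality. You instead run the duality argument: equality $f+f^c=c$ on the graph of $T$ plus weak Kantorovich duality shows that $(\mathrm{id}\times T)_\#\mu$ beats every competitor plan. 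This is exactly the ``McCann-style'' alternative that the paper's own remark after the corollary mentions and deliberately avoids; your version has the virtue of using only the elementary inequality $f+f^c\le c$ rather than the (deeper) theorem that $c$-monotone support implies optimality, at the cost of handling $f^c$ (harmless here, since $f^c$ is continuous on the compact $M$).

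Two small points in your step identifying $y^*=T(x)$. First, the step is slightly roundabout: the proof of \cref{prop:tech_version} already exhibits $T(x)=\exp_x(-\nabla f(x))$ itself as an element of $\partial^c f(x)$, so you never need to start from an abstract $y^*$ and pin it down via the first-order condition. Second, if you do argue that way, the justification that $y^*$ lies inside the injectivity domain of $\exp_x$ should go through the oscillation of $f$ rather than directly through $\norm{\nabla f}_\infty$: testing the minimality of $x$ against $z=y^*$ gives $\tfrac12 d^2(x,y^*)\le f(x)-f(y^*)\le \diam(M)\norm{\nabla f}_\infty$, which is what keeps $y^*$ away from the cut locus of $x$ and legitimizes the formula $\nabla_x c(x,y^*)=-\exp_x^{-1}(y^*)$. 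With that made explicit, the argument is complete.
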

\begin{proof}
    This is a consequence of the strategy adopted in the proof of \cref{thm:main_thm}. Indeed we will prove that
    \begin{equation*}
        \frac12 d^2(x, \exp_x(-\nabla f(x)))-f(x)\le \frac12 d^2(y, \exp_x(-\nabla f(x)))-f(y)
    \end{equation*}
    for any $x,y\in M$. Given a set of $n$ points $(x_i)_{1\le i \le n}$ and a permutation $\sigma\in S_n$, summing $n$ times the latter inequality we obtain
    \begin{equation*}
        \sum_{i=1}^n \frac12 d^2(x_i, \exp_{x_i}(-\nabla f(x_i))) \le \sum_{i=1}^n \frac12 d^2(x_i, \exp_{x_\sigma(i)}(-\nabla f(x_{\sigma(i)})))
    \end{equation*}
    that proves the $c$-monotonicity of the graph of $T$ and therefore the optimality of $T$.
\end{proof}
\begin{remark}
    The previous corollary could also be deduced directly from the $c$-concavity of the function $f$ using an approach similar to the one used to prove McCann theorem. Anyway we have chosen to give a simpler proof that exploits the equivalence between optimality and $c$-monotonicity.
\end{remark}

\section{Technical Propositions and Proofs}
Let us start stating a well-known characterization of $c$-concavity.
\begin{lemma}\label{lem:concavity_equivalence}
    Let $X$ be a metric space and let $c:X\times X\to\R$ be a symmetric lower-semicontinuous cost.
    A function $f:X\to\R$ is $c$-concave if and only if for any $x\in X$ there exists $x^*\in X$ such that $x\in\argmin\{c(x^*,\emptyparam)-f(\emptyparam)\}$.
\end{lemma}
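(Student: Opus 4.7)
The plan is to handle the two implications separately, since the ``if'' direction is just an unpacking of the definitions while the ``only if'' direction rests on a compactness argument.

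For the easy ($\Leftarrow$) direction, I would fix for each $x \in X$ a witness $x^*(x) \in X$ as in the hypothesis and set $\alpha_x := c(x^*(x), x) - f(x)$. The minimality condition then rewrites as $f(y) \le c(x^*(x), y) - \alpha_x$ for all $y \in X$, with equality at $y = x$. Using the symmetry of $c$, this exhibits $f$ as the pointwise infimum of the family $\{y \mapsto c(y, x^*(x)) - \alpha_x\}_{x \in X}$, which is precisely the definition of $c$-concavity.

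For the harder ($\Rightarrow$) direction, starting from a representation $f(x) = \inf_{i \in I}[c(x, y_i) - \alpha_i]$ and a fixed $x_0 \in X$, I would produce the witness $x_0^*$ by a minimizing-sequence argument. The key observation is that every $y_i$ is already a ``near-witness'': the defining inequality $f(y) \le c(y, y_i) - \alpha_i$, combined with the symmetry of $c$, reads $c(y_i, y) - f(y) \ge \alpha_i$ for all $y \in X$, and the gap between the two sides at $y = x_0$ is exactly $c(x_0, y_i) - f(x_0) - \alpha_i$. Choosing indices $i_n$ with $c(x_0, y_{i_n}) - \alpha_{i_n} \to f(x_0)$ makes this gap tend to $0$; I would then use compactness of $X$ together with continuity of $c$ to extract a limit $y_{i_n} \to x_0^*$ and pass to the limit, obtaining
\begin{equation*}
    c(x_0^*, y) - f(y) \ge c(x_0^*, x_0) - f(x_0) \quad \text{for every } y \in X,
\end{equation*}
which is exactly the statement $x_0 \in \argmin_y\{c(x_0^*, y) - f(y)\}$.

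The main obstacle is securing the limit point $x_0^*$: in full generality (an abstract metric space with a merely lower-semicontinuous cost) the step would require extra care, for instance by replacing the family $(y_i, \alpha_i)$ with its upper-semicontinuous envelope; but in the setting of this paper, where $X$ is a compact Riemannian manifold and $c = \tfrac{1}{2}d^2$ is continuous, compactness and continuity handle the entire passage to the limit at once, after which the concluding inequality is a one-line rearrangement.
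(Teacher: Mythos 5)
Your argument is correct in the setting where the lemma is actually used, but it takes a genuinely different route from the paper, which settles the lemma in one line by citing the standard $c$-transform duality: $f$ is $c$-concave if and only if for every $x$ there is $x^*$ with $f(x)+f^*(x^*)=c(x,x^*)$, where $f^*(x^*)=\inf_y\left[c(y,x^*)-f(y)\right]$. Once the definition of $f^*$ is unpacked, that identity is a verbatim restatement of the $\argmin$ condition, so the paper delegates all the content to the cited fact; your backward implication is exactly what that citation hides, and your forward implication replaces it with an explicit minimizing-sequence argument, which is the more informative choice. The one substantive point is the caveat you raise yourself: the forward direction genuinely needs compactness of $X$ and continuity (not mere lower semicontinuity) of $c$ --- to pass to the limit in $c(y_{i_n},y)-f(y)\ge\alpha_{i_n}$ you need $c(x_0^*,y)\ge\limsup_n c(y_{i_n},y)$, which is upper semicontinuity, and you also use continuity to identify $\lim_n\alpha_{i_n}$ with $c(x_0,x_0^*)-f(x_0)$. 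This is not a defect of your proof but of the lemma's stated generality: the forward implication can fail on a non-compact space. For instance, on $X=(0,1)$ with $c=\frac12 d^2$ the function $f(x)=\frac12 x^2-\frac12$ is $c$-concave (take $y_n\to 0$ and $\alpha_n=\frac12 y_n^2-y_n+\frac12$, so that each competitor $\frac12(x-y_n)^2-\alpha_n=\frac12 x^2+y_n(1-x)-\frac12$ lies above $f$ on $(0,1)$ and the infimum equals $f$ without being attained), yet for every $x^*\in(0,1)$ the map $y\mapsto c(x^*,y)-f(y)$ is strictly decreasing and has no minimizer. Since the paper invokes the forward direction only on a compact Riemannian manifold with the continuous cost $\frac12 d^2$ (in the proof of \cref{prop:c_conc_necessary}), your proof covers every use made of the lemma; I would only promote the compactness and continuity hypotheses from a closing remark to explicit assumptions.
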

\begin{proof}
    It is an easy consequence of the fact that $f$ is $c$-concave if and only if for any $x\in X$ there exists $x^*\in X$ such that $f(x)+f^*(x^*) = c(x, x^*)$.
\end{proof}

The two following statements are rather known results in Riemannian geometry. 
The first one is a version of the Hessian comparison that compares a manifold with the constant curvature model, whereas the second is a lower-bound for the convexity radius of a manifold. As a corollary of the Hessian comparison we will obtain a \emph{quantitative} estimate on the Hessian of the square of the distance.

It is not restrictive to assume that $K>0$, and indeed we are going to do it in the following statements, since when $K=0$ all the results can be recovered through a limit procedure.
\begin{theorem}[Hessian Comparison]\label{thm:hess_comp}
    Let us fix a point $x\in M$ and define $r(y)\defeq d(x, y)$.
    At any point $y\in M$ such that $d(x,y)\le\min(\frac{\pi}{\sqrt{K}}, \injradius(M))$ it holds
    \begin{equation*}
        \nabla^2 r \ge \frac{\sqrt{K}\cos(\sqrt{K}r)}{\sin(\sqrt{K}r)}
        \left(\metric - \de r\otimes\de r\right) \fullstop
    \end{equation*}
\end{theorem}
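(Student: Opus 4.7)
The plan is to reduce the computation of $\nabla^2 r$ to a Jacobi field problem along the minimizing geodesic from $x$ to $y$, and then to invoke the Rauch comparison theorem to bound things below by the constant-curvature-$K$ model.

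First I would dispose of the radial direction. Along each geodesic emanating from $x$ the vector field $\nabla r$ is unit and parallel to itself, so $\nabla_{\nabla r}\nabla r=0$ and hence $\nabla^2 r(\nabla r,\nabla r)=0$; on the other hand the tensor $\metric-\de r\otimes\de r$ also annihilates $\nabla r$. Thus the inequality is an equality of zero on the radial subspace, and it suffices to prove it for unit vectors $v\in T_y M$ perpendicular to $\nabla r(y)$.

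For such a $v$, let $\gamma:[0,L]\to M$ with $L=r(y)$ be the unit-speed minimizing geodesic from $x$ to $y$. Since $L\le\injradius(M)$, the geodesic $\gamma$ is free of conjugate points, so the boundary-value problem for Jacobi fields along $\gamma$ with $J(0)=0$ and $J(L)=v$ has a unique solution $J$. The standard variational formula (coming from a one-parameter family of radial geodesics interpolating $\gamma$ with nearby minimizers to points close to $y$) gives
\begin{equation*}
    \nabla^2 r(v,v)=\langle \nabla_t J(L),\,v\rangle.
\end{equation*}
By Rauch's comparison theorem (using $\sec\le K$), this quantity is bounded below by its analogue in the simply-connected space form of constant curvature $K$, for the Jacobi field $\widetilde J$ with the same endpoint conditions. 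In that model one writes $\widetilde J(t)=f(t)E(t)$ with $E$ a parallel unit field perpendicular to the geodesic, and the Jacobi equation reduces to $f''+Kf=0$ with $f(0)=0$, $f(L)=1$. The assumption $L<\pi/\sqrt{K}$ ensures $\sin(\sqrt{K}L)>0$, so the solution $f(t)=\sin(\sqrt{K}t)/\sin(\sqrt{K}L)$ is well-defined, and $f'(L)=\sqrt{K}\cos(\sqrt{K}L)/\sin(\sqrt{K}L)$, which is exactly the coefficient appearing in the statement.

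The main technical obstacle is the correct invocation of Rauch, together with the bookkeeping that it is really a lower bound that comes out (when $L>\pi/(2\sqrt{K})$ the model value is already negative, so the comparison runs in the ``right'' direction only because the index form of a Jacobi field is minimal among fields with the same boundary values). An alternative, more ODE-flavoured route would be to work directly with the shape operator $S$ of the geodesic spheres around $x$: it satisfies the Riccati equation $S'+S^2+R=0$ along radial geodesics, and a Sturm-type comparison with the scalar model Riccati $u'+u^2+K=0$ (matched by the common blow-up $u\sim 1/r$ as $r\to 0^+$) yields the same conclusion while avoiding an explicit appeal to Jacobi fields.
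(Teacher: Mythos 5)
Your argument is correct: the reduction to the tangential directions, the identity $\nabla^2 r(v,v)=\langle \nabla_t J(L),v\rangle$ for the Jacobi field with $J(0)=0$, $J(L)=v$, and the index-form comparison with the constant-curvature model (where the lower bound indeed comes from the minimality of the model Jacobi field among transplanted fields with the same boundary values) together give exactly the stated inequality. The paper offers no proof of its own here --- it simply cites Petersen, p.~342 --- and what you have written is essentially the standard argument found in that reference, so there is nothing further to compare.
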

\begin{proof}
    An equivalent, albeit not completely identical, statement can be found at \cite[p.~342]{Petersen98}.
\end{proof}
\begin{corollary}\label{cor:hess_square}
    With the same assumptions of \cref{thm:hess_comp}, if we also have $r<\frac1{\sqrt{K}}$, then it holds
    \begin{equation*}
        \frac12\nabla^2 (r^2) \ge (1-Kr^2)\metric \fullstop
    \end{equation*}
\end{corollary}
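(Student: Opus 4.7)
The plan is to expand the Hessian of $r^2$ in terms of $\nabla^2 r$, apply \cref{thm:hess_comp}, and then reduce the desired estimate to an elementary trigonometric inequality in the variable $t = \sqrt{K}\,r$.

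First, note that $\nabla(r^2) = 2r\,\nabla r$, so a straightforward computation gives
\begin{equation*}
    \tfrac12\nabla^2(r^2) \;=\; \de r\otimes\de r \,+\, r\,\nabla^2 r\fullstop
\end{equation*}
Since the hypotheses of \cref{thm:hess_comp} are satisfied, substituting the lower bound for $\nabla^2 r$ yields
\begin{equation*}
    \tfrac12\nabla^2(r^2) \;\ge\; \de r\otimes\de r + \sqrt{K}\,r\,\cot(\sqrt{K}\,r)\bigl(\metric-\de r\otimes \de r\bigr)\fullstop
\end{equation*}

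Set $t\defeq \sqrt{K}\,r\in(0,1)$ and decompose $\metric = A+B$ where $A=\de r\otimes \de r$ and $B=\metric-A$ (both non-negative, mutually orthogonal, with $A+B=\metric$). The above reads $\tfrac12\nabla^2(r^2)\ge A + t\cot(t)\,B$, while the claim we want is $\tfrac12\nabla^2(r^2)\ge (1-t^2)(A+B)$. So the corollary reduces to
\begin{equation*}
    t^2\,A \,+\, \bigl(t\cot(t)-(1-t^2)\bigr)\,B \;\ge\; 0,
\end{equation*}
and since $A,B\ge 0$, it suffices to show the scalar inequality $t\cot(t)\ge 1-t^2$ on $(0,1)$.

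The last step is the only non-trivial computation, but it is elementary: multiplying by $\sin(t)>0$ and rearranging, one wants $g(t)\defeq t\cos(t)-(1-t^2)\sin(t)\ge 0$ for $t\in(0,1)$. Since $g(0)=0$ and a direct differentiation gives $g'(t)=t\sin(t)+t^2\cos(t)$, which is strictly positive on $(0,\pi/2)$, we conclude $g\ge 0$ on $(0,1)$. I do not expect any real obstacle here; the only point requiring care is that the bound $r<1/\sqrt{K}$ is exactly what keeps $t$ inside the range where the trigonometric inequality holds.
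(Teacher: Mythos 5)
Your proof is correct and follows essentially the same route as the paper: expand $\tfrac12\nabla^2(r^2)=\de r\otimes\de r+r\nabla^2 r$, insert the Hessian comparison, and reduce to an elementary inequality for $t\cot(t)$ on $(0,1)$. The only (harmless) difference is that you keep the orthogonal decomposition into $\de r\otimes\de r$ and $\metric-\de r\otimes\de r$ and thus only need the one-sided bound $t\cot(t)\ge 1-t^2$, which you actually verify, whereas the paper collapses everything onto a multiple of $\metric$ via $\de r\otimes\de r\le\metric$ and invokes the two-sided bound $\abs{1-t\cot(t)}\le t^2/2$, left to the reader.
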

\begin{proof}
    Applying the usual calculus rules and \cref{thm:hess_comp} we get
    \begin{equation*}
        \frac12\nabla^2 (r^2) = \de r\otimes\de r + r\nabla^2 r 
        \ge \alpha(\sqrt{K}r)\metric + (1-\alpha(\sqrt{K}r))\de r\otimes\de r \comma
    \end{equation*}
    where $\alpha:\co{0}{\infty}\to\R$ is the function given by $\alpha(t) = \frac{\cos(t)t}{\sin(t)}$.
    The identity $\abs{\nabla r} = 1$ implies $\de r\otimes\de r\le \metric$, therefore we can continue the chain of inequalities and obtain
    \begin{equation*}
        \frac12\nabla^2 (r^2) \ge \left(\alpha(\sqrt{K}r)-\abs{1-\alpha(\sqrt{K}r)}\right)\metric \fullstop
    \end{equation*}
    Hence the thesis follows if we show that for any $0\le t<1$ it holds
    \begin{equation*}
        \abs{1-\alpha(t)}\le \frac{t^2}2 \fullstop
    \end{equation*}
    We leave the proof of this elementary inequality to the reader.
\end{proof}

\begin{theorem}[Convexity Radius Lower-Bound]\label{thm:convexity_radius}
    For any point $x\in M$ and $\delta \le \min\left(\frac{\injradius(M)}2, \frac{\pi}{2\sqrt{K}}\right)$, the ball $B(x, \delta)$ is geodesically convex\footnote{A domain $D\subseteq M$ is geodesically convex if for any $x, y\in D$ there exists a geodesic of length $d(x,y)$ that connects the two points and is completely contained in $D$.}.
\end{theorem}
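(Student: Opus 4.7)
The plan is to take arbitrary $y_1, y_2 \in B(x, \delta)$ and produce a length-minimizing geodesic connecting them that stays inside the ball. Since $d(y_1, y_2) < 2\delta \le \injradius(M)$, the exponential map immediately furnishes a unique minimizing geodesic $\gamma\colon [0, L] \to M$, parametrized by arclength with $L = d(y_1, y_2)$; the actual content is to show $r(s) := d(x, \gamma(s)) < \delta$ for every $s \in [0, L]$.

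The natural strategy is to prove that $g(y) := \tfrac12 d^2(x, y)$ is strictly convex along $\gamma$. Going back to \cref{thm:hess_comp} (rather than the weaker \cref{cor:hess_square}, which loses too much once $r > 1/\sqrt{K}$) and writing $\nabla^2 g = \de r \otimes \de r + r\nabla^2 r$ gives
\[
    \nabla^2 g \ge \sqrt{K}\,r\cot(\sqrt{K}\,r)\,\metric + \bigl(1 - \sqrt{K}\,r\cot(\sqrt{K}\,r)\bigr)\,\de r \otimes \de r,
\]
whose right-hand side is strictly positive as soon as $\sqrt{K}\,r < \pi/2$. Consequently, if one could a priori ensure $r(s) < \pi/(2\sqrt{K})$ along $\gamma$, strict convexity of $s \mapsto g(\gamma(s))$ would force $g(\gamma(s)) \le \max\bigl(g(\gamma(0)), g(\gamma(L))\bigr) < \delta^2/2$, equivalently $r(s) < \delta$.

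The real difficulty is therefore to secure the a priori bound $r(s) < \pi/(2\sqrt{K})$, since the triangle inequality only yields the weaker estimate $r(s) < 2\delta \le \pi/\sqrt{K}$. That weaker bound nevertheless keeps the whole geodesic inside the domain of validity of \cref{thm:hess_comp}, and by computing $u''$ for $u(s) := \cos(\sqrt{K}\,r(s))$ and substituting the Hessian lower bound on $r''$ (obtained by decomposing $\gamma'$ along $\nabla r$ and its orthogonal complement, using $|\gamma'|=1$), one arrives at the one-dimensional differential inequality $u'' + K\,u \le 0$. With $u(0), u(L) > 0$ and $\sqrt{K}\,L < \pi$, a Sturm-type comparison with the positive solution $\phi(s) := \cos(\sqrt{K}(s - L/2))$ of $\phi'' + K\phi = 0$ — essentially the observation that $\bigl((u/\phi)'\phi^2\bigr)' = (u''+Ku)\phi \le 0$, so that $u/\phi$ admits only interior maxima and its minimum on $[0, L]$ must be attained at an endpoint — forces $u > 0$ throughout, i.e.\ $r(s) < \pi/(2\sqrt{K})$. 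This closes the bootstrap and is the principal technical obstacle of the proof.
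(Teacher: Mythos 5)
The paper does not actually prove this theorem: its ``proof'' is a pointer to Chavel's book, so there is no in-paper argument to compare against. What you have written is, in substance, the classical Whitehead-type proof of the convexity-radius estimate (essentially the argument the cited reference contains), and it is correct. The two-step structure is the right one: the triangle inequality only places $\gamma$ inside $B(x,2\delta)\subseteq B(x,\injradius(M))\cap B(x,\pi/\sqrt{K})$, which is enough to run \cref{thm:hess_comp} but not enough for convexity of $\tfrac12 d^2(x,\cdot)$; your Sturm step repairs this. The computation checks out: with $u=\cos(\sqrt{K}r)$ one gets $u''=-K\cos(\sqrt{K}r)(r')^2-\sqrt{K}\sin(\sqrt{K}r)\,r''$, and since $\sin(\sqrt{K}r)\ge 0$ on the range $\sqrt{K}r<\pi$ the Hessian bound $r''\ge\sqrt{K}\cot(\sqrt{K}r)\bigl(1-(r')^2\bigr)$ may be substituted to give $u''\le -Ku$; the identity $\bigl((u/\phi)'\phi^2\bigr)'=(u''+Ku)\phi$ with $\phi>0$ on $[0,L]$ (using $\sqrt{K}L<\pi$) then correctly forces $\min_{[0,L]}u/\phi$ to be attained at an endpoint, hence $u>0$ and $r<\pi/(2\sqrt{K})$, after which your convexity argument closes. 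Two points you should make explicit rather than leave implicit: (i) the regularity of $r$ along $\gamma$ --- $r$ is smooth on $B(x,\injradius(M))\setminus\{x\}$, and if $\gamma$ passes through $x$ at some $s_0$ then $r(s)=|s-s_0|$ nearby, so $u(s)=\cos\bigl(\sqrt{K}(s-s_0)\bigr)$ is still smooth and satisfies $u''+Ku=0$ there, so nothing breaks; (ii) the existence (Hopf--Rinow on the compact $M$) and uniqueness of the minimizing geodesic, which uses $d(y_1,y_2)<2\delta\le\injradius(M)$. Your remark that \cref{cor:hess_square} is too lossy here and that one must return to \cref{thm:hess_comp} is also accurate.
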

\begin{proof}
    It can be found at \cite[p.~404]{chavel2006}.
\end{proof}

\begin{proposition}[Technical Version of the Main Theorem]\label{prop:tech_version}
    Given a $C^2$ function $f:M\to\R$, let us denote $\delta = \sqrt{2\diam(M)\norm{\nabla f}_\infty + \norm{\nabla f}^2_\infty}$.
    If $\delta\le \min(\frac{\injradius(M)}2, \frac1{\sqrt{k}})$ and $\nabla^2 f\le (1-K\delta^2)\metric$, then $f$ is $c$-concave.
\end{proposition}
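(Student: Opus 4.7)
The plan is to verify the criterion of \cref{lem:concavity_equivalence}: for each $x \in M$ I want to exhibit a point $x^* \in M$ at which $x$ is a global minimizer of the function $h(y) \defeq \frac12 d^2(y, x^*) - f(y)$. Imitating the Euclidean case, the natural candidate is $x^* \defeq \exp_x(-\nabla f(x))$. The smallness $\norm{\nabla f}_\infty < \delta \le \injradius(M)/2$ ensures both that this exponential is meaningful and that the classical formula for the gradient of the squared distance gives $\nabla h(x) = \nabla f(x) - \nabla f(x) = 0$; so $x$ is automatically a critical point of $h$, and the task reduces to upgrading it to a global minimum.

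I would split $M$ into the open ball $B(x^*, \delta)$ and its complement. Inside the ball, the assumption $\delta \le \min(\injradius(M)/2, 1/\sqrt{K})$ in particular forces $\delta \le \min(\injradius(M)/2, \pi/(2\sqrt{K}))$, so \cref{thm:convexity_radius} guarantees that $B(x^*, \delta)$ is geodesically convex. \cref{cor:hess_square} then yields $\frac12 \nabla^2 d^2(\cdot, x^*) \ge (1 - K\delta^2)\metric$ on the whole ball, and combined with the assumption $\nabla^2 f \le (1 - K\delta^2)\metric$ this gives $\nabla^2 h \ge 0$ throughout $B(x^*, \delta)$. Since $x$ itself lies in this ball (because $d(x,x^*) \le \norm{\nabla f}_\infty < \delta$) and is a critical point of the geodesically convex function $h$, it is the global minimum of $h$ on $B(x^*, \delta)$.

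For the complement, the argument reduces to a one-line estimate: $d(y,x^*) > \delta$ implies
\begin{equation*}
    \tfrac12 d^2(y, x^*) - \tfrac12 d^2(x, x^*) > \tfrac12\bigl(\delta^2 - \norm{\nabla f}_\infty^2\bigr) = \diam(M)\,\norm{\nabla f}_\infty,
\end{equation*}
and the right-hand side bounds $f(y) - f(x)$ from above via the oscillation estimate recalled in the Notations section. Thus $h(y) > h(x)$ outside the ball as well, and putting the two regions together proves that $x$ realizes the global minimum of $h$, as required.

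The main obstacle is really the choice of parameters, not any one technical step: one must guess both that $x^*$ should be $\exp_x(-\nabla f(x))$ and that $\delta$ should be taken precisely so that $\delta^2 - \norm{\nabla f}_\infty^2 = 2\diam(M)\norm{\nabla f}_\infty$, which is exactly what makes the Hessian estimate from \cref{cor:hess_square} and the Lipschitz oscillation bound on $f$ dovetail without slack. Once these two choices are pinned down, the proof is a mechanical concatenation of the geometric tools already prepared above.
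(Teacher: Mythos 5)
Your proposal is correct and follows essentially the same route as the paper: the same choice $x^* = \exp_x(-\nabla f(x))$, the same decomposition into $B(x^*,\delta)$ and its complement, the same use of the convexity radius bound and of \cref{cor:hess_square} inside the ball, and the same oscillation estimate outside. The only (harmless) caveat is the strict inequality $\norm{\nabla f}_\infty < \delta$, which fails exactly when $\nabla f \equiv 0$, a case where $f$ is constant and trivially $c$-concave.
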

\begin{proof}
    The proof of this proposition is \emph{heavily} inspired by the proof of Theorem 3.15~\cite{Villani08}. Indeed, we are making quantitative the approach proposed by Villani with the help of the Hessian Comparison Theorem.
    
    Let us fix $x\in M$ and define $x^*=\exp_x(-\nabla f(x))$. We are going to prove that $x$ is a minimizer of the function $h(y) \defeq \frac12d^2(x^*, y)-f(y)$. The $c$-concavity follows thanks to \cref{lem:concavity_equivalence}.
    
    More specifically we will show that the three following claims hold:
    \begin{enumerate}
        \item If $y\not\in B(x^*, \delta)$ then $h(y)\ge h(x)$.
        \item It holds $\nabla h(x) = 0$.
        \item In the ball $B(x^*, \delta)$ the function $h$ is convex (i.e. $\nabla^2 h\ge 0$).
    \end{enumerate}
    These three claims imply that $x$ is a global minimizer as we can restrict ourselves in the ball $B(x^*,\delta)$ thanks to 1. and then $x$ is a critical point of a convex function in a convex domain. The convexity of $B(x^*, \delta)$ is a consequence of the assumption on $\delta$ thanks to \cref{thm:convexity_radius}.
    
    Let us prove the three claims separately:
    \begin{description}
     \item[Proof of 1.] If $d(x^*, y)\ge\delta$ we have
        \begin{align*}
            h(y)-h(x) &\ge 
            \frac12 d^2(x^*, y) - (\sup f - \inf f) - \frac12\norm{\nabla f}^2_\infty \\
            & \ge \frac12\delta^2 - \diam(M)\norm{\nabla f}_\infty - \frac12\norm{\nabla f}^2_\infty \ge 0 \fullstop
        \end{align*}
     
     \item[Proof of 2.] The function $d^2(x^*, \emptyparam)$ is smooth in $x$ since $d(x, x^*) = \abs{\nabla f(x)}\le \delta < \injradius(M)$. Hence also the function $h$ is smooth and its gradient is
     \begin{equation*}
        \nabla h(x) = \gamma'(1) - \nabla f(x) \fullstop
     \end{equation*}
    where $\gamma:\cc{0}{1}\to M$ is the constant speed geodesic from $x^*$ to $x$. From the definition of $x^*$ it follows that $\gamma'(1) = \nabla f(x)$ and therefore $\nabla h(x) = 0$.
    
     \item[Proof of 3.] Our assumptions on $\delta$ are exactly those needed to apply \cref{cor:hess_square}, hence, denoting $r(y)=d(x^*, y)$, we get
     \begin{equation*}
        \nabla^2 h = \frac12\nabla^2(r^2)-\nabla^2 f
        \ge (1-Kr^2)\metric-(1-K\delta^2)\metric \ge 0
     \end{equation*}
     that is exactly what we had to show.
    \end{description}
\end{proof}

\section{A Counterexample to the Na\"{i}ve Statement}
In this section we show that the condition $\nabla^2 f\le\metric$ is not sufficient of the $c$-concavity of $f$. We will find a counterexample when the Riemannian manifold is the $2$-dimensional sphere $\S^2$.

Let us start by giving a necessary condition for being $c$-concave.
\begin{proposition}\label{prop:c_conc_necessary}
    There exists a constant $\delta=\delta(M)$ such that, for any $f\in C^1(M)$ with $\osc(f)\le \delta$, the following statements are equivalent:
    \begin{enumerate}
     \item $f$ is $c$-concave;
     \item for any $x\in M$, it holds $x\in\argmin \frac12d^2(x^*, y)-f(y)$ where $x^*=\exp_x(-\nabla f)$.
    \end{enumerate}
\end{proposition}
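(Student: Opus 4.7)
The plan is to prove the two directions separately. The direction $(2)\Rightarrow(1)$ is immediate from \cref{lem:concavity_equivalence}: hypothesis (2) produces, for every $x$, an explicit witness $x^{*}=\exp_{x}(-\nabla f(x))$ such that $x\in\argmin(\frac12 d^{2}(x^{*},\emptyparam)-f)$, so $f$ is $c$-concave.

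For the converse $(1)\Rightarrow(2)$, fix $x\in M$. By \cref{lem:concavity_equivalence} there exists some $\tilde x\in M$ such that $x$ minimizes $h(y)\defeq \frac12 d^{2}(\tilde x,y)-f(y)$. The strategy is to identify $\tilde x$ with the canonical point $x^{*}=\exp_{x}(-\nabla f(x))$, by combining a crude size estimate on $d(x,\tilde x)$ with a first-order optimality computation. First I would bound $d(x,\tilde x)$ by comparing $h(x)\le h(\tilde x)$, which yields $\frac12 d^{2}(x,\tilde x)\le f(x)-f(\tilde x)\le\osc(f)\le\delta$, hence $d(x,\tilde x)\le\sqrt{2\delta}$. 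Choosing $\delta$ small enough (for instance $\delta\le \frac12\injradius(M)^{2}$, possibly shrunk further to stay below any required cut-locus threshold), this places $\tilde x$ strictly inside the injectivity radius of $x$, so $y\mapsto\frac12 d^{2}(\tilde x,y)$ is smooth at $y=x$ with gradient $-\exp_{x}^{-1}(\tilde x)$.

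The first-order optimality $\nabla h(x)=0$ then reads $\exp_{x}^{-1}(\tilde x)=-\nabla f(x)$, and applying $\exp_{x}$ to both sides gives $\tilde x=\exp_{x}(-\nabla f(x))=x^{*}$. Hence the witness produced by \cref{lem:concavity_equivalence} is forced to equal $x^{*}$, which is exactly (2).

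The only delicate point is calibrating the universal constant $\delta=\delta(M)$ so that the oscillation bound genuinely forces $d(x,\tilde x)<\injradius(M)$, ensuring the squared distance is smooth at $x$ and $\exp_{x}^{-1}(\tilde x)$ is well-defined. Everything else is a routine first-order critical point computation, completely parallel to the proof of claim 2 in \cref{prop:tech_version}.
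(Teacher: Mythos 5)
Your proposal is correct and follows essentially the same route as the paper's proof: obtain the witness from \cref{lem:concavity_equivalence}, use the smallness of the oscillation to force the witness within the injectivity radius, and then identify it with $\exp_x(-\nabla f(x))$ via the first-order condition. You merely make explicit the estimate $\frac12 d^2(x,\tilde x)\le \osc(f)$ (obtained from $h(x)\le h(\tilde x)$) that the paper leaves as an ``easy'' step.
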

\begin{proof}
    The implication $2.\implies 1.$ is a straightforward consequence of \cref{lem:concavity_equivalence}.
    
    For the other implication, let us use again \cref{lem:concavity_equivalence} to get that for any $x\in M$ there exists $x^*\in M$ such that $x\in\argmin \frac12d^2(x^*, y)-f(y)$. Thanks to our hypothesis on $\norm{f}_\infty$ we can easily get that any such $x^*$ must be near to $x$ and therefore the distance from $x^*$ has to be smooth at $x$. Hence, given that in $x$ the function $\frac12d^2(x^*, y)-f(y)$ has a minimum, its gradient must be null. 
    Therefore, computing the gradient of the distance function it is easy to prove that it must hold $x^*=\exp_x(-\nabla f)$.
\end{proof}
Our strategy is now to negate that $x\in\argmin \frac12d^2(x^*, y)-f(y)$ looking at the Hessian. Indeed we will build a function such that $\nabla^2 f\le \metric$ but the Hessian of $\frac12d^2(x^*, y)-f(y)$ is not positive-definite at $y=x$.

From now on we will always work on $\S^2$ (of course $\metric$ will denote the Riemannian metric on $\S^2$). The main reason behind this choice is that in this setting the inequality stated in \cref{thm:hess_comp} becomes an identity\footnote{We don't really need to work in dimension $2$ instead of general dimension, but we believe it is much easier to follow a reasoning on $\S^2$ than on a higher dimensional sphere.}.
Let us state explicitly the said identity:
\begin{proposition}[Hessian on the Sphere]\label{prop:hessian_identity}
    Let us fix a point $x\in S^2$ and define $r(y)\defeq d(x, y)$.
    At any point $y\in S^2$ such that $0<d(x,y)<\pi$, it holds
    \begin{equation*}
        \nabla^2 r = \cot(r)
        \left(\metric - \de r\otimes\de r\right)
    \end{equation*}
    and
    \begin{equation*}
        \nabla^2\left(\frac12 r^2\right) = r\cot(r)\cdot\metric + (1-r\cot(r))\de r\otimes\de r \fullstop
    \end{equation*}

\end{proposition}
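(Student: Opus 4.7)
The plan is to derive the Hessian identity by a direct Jacobi field computation, exploiting the fact that $\S^2$ has constant sectional curvature equal to $1$, so the comparison in \cref{thm:hess_comp} is in fact an equality. The second identity for $\frac12 r^2$ then follows from the first by a routine application of the product rule.

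First I would decompose the tangent space at $y$ as $T_y\S^2 = \R\nabla r \oplus (\nabla r)^\perp$ and compute $\nabla^2 r$ separately on each summand. The radial direction is trivial: since $\abs{\nabla r}\equiv 1$ on the domain where $r$ is smooth, differentiating the identity $\metric(\nabla r,\nabla r)=1$ yields $\nabla^2 r(\nabla r,\emptyparam)=0$, so there is no contribution from the $\de r\otimes\de r$ part. This matches the claim, since the factor $\metric - \de r\otimes\de r$ annihilates $\nabla r$.

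For a unit vector $v\perp\nabla r(y)$, let $\gamma:\cc{0}{r(y)}\to \S^2$ be the unit-speed minimizing geodesic from $x$ to $y$, and let $J$ be the unique Jacobi field along $\gamma$ with $J(0)=0$ and $J(r(y))=v$. A classical formula (derived from the second variation of arclength) gives
\begin{equation*}
    \nabla^2 r(v,v) = \metric\bigl(\nabla_{\gamma'} J(r(y)), v\bigr) \fullstop
\end{equation*}
On $\S^2$ the Jacobi equation for a field perpendicular to $\gamma'$ reduces to $J'' + J = 0$, so the solution vanishing at $t=0$ is $J(t)=\frac{\sin(t)}{\sin(r(y))}\,E(t)$, where $E$ is the parallel transport of $v$ along $\gamma$. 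Differentiating and evaluating at $t=r(y)$ gives $\nabla_{\gamma'}J(r(y)) = \cot(r(y))\,v$, and hence $\nabla^2 r(v,v)=\cot(r(y))$. Combining with the radial computation yields the first identity.

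For the second identity, the product rule gives $\nabla(\tfrac12 r^2)=r\,\nabla r$, and therefore
\begin{equation*}
    \nabla^2\!\left(\tfrac12 r^2\right) = \de r\otimes\de r + r\,\nabla^2 r
    = \de r\otimes\de r + r\cot(r)\bigl(\metric - \de r\otimes\de r\bigr) \comma
\end{equation*}
which rearranges exactly to $r\cot(r)\metric + (1-r\cot(r))\de r\otimes\de r$. The only genuine step is the Jacobi field identification on the sphere; everything else is bookkeeping. The main subtlety is simply ensuring that $r(y)<\pi$ is used to guarantee $\sin(r(y))\neq 0$, so that $J$ is well-defined and $r$ is smooth at $y$, which is precisely the hypothesis of the statement.
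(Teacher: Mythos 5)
Your proof is correct. The paper gives no proof of this proposition at all — it merely observes beforehand that on the constant-curvature sphere the inequality of \cref{thm:hess_comp} becomes an identity — and your argument (the radial direction killed by differentiating $\abs{\nabla r}\equiv 1$, the tangential direction via the Jacobi equation $J''+J=0$ and the resulting $\cot(r)$, then the product rule for $\frac12 r^2$ exactly as in the proof of \cref{cor:hess_square}) is precisely the standard computation that substantiates this claim, with the hypothesis $0<r<\pi$ correctly invoked to keep $y$ away from $x$ and its antipode so that $r$ is smooth and $\sin(r)\neq 0$.
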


\begin{proposition}\label{prop:counterexample_property}
    Let $f:\S^2\to\R$ be a $C^2(\S^2)$ function such that $\norm{f}_\infty$ is as small as asked in \cref{prop:c_conc_necessary}. If there exists an $x\in\S^2$ such that $\nabla f(x)\not= 0$ and $\nabla^2 f(x) = \metric$, then $f$ is not $c$-concave.
\end{proposition}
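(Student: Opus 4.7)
The plan is to argue by contradiction using Proposition~\ref{prop:c_conc_necessary}: if $f$ were $c$-concave, then at the distinguished point $x$ we would have $x\in\argmin h$, where $h(y)=\tfrac12 d^2(x^*,y)-f(y)$ and $x^*=\exp_x(-\nabla f(x))$. Being an interior minimizer, $x$ would have to satisfy $\nabla^2 h(x)\ge 0$. My strategy is to use the explicit Hessian identity on $\S^2$ from \cref{prop:hessian_identity} to compute $\nabla^2 h(x)$ exactly, and then to check that, under the stated assumptions, this Hessian actually has a strictly negative eigenvalue.

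First I would observe that the smallness of $\osc(f)$ (as in \cref{prop:c_conc_necessary}) forces $d(x^*,x)=|\nabla f(x)|$ to be small, and in particular strictly between $0$ and $\pi$ since $\nabla f(x)\ne 0$. Setting $r=d(x^*,x)$ and $r(y)=d(x^*,y)$, \cref{prop:hessian_identity} gives at $y=x$
\begin{equation*}
    \nabla^2\!\left(\tfrac12 r^2\right)(x) = r\cot(r)\cdot\metric + \bigl(1-r\cot(r)\bigr)\,\de r\otimes\de r.
\end{equation*}
Subtracting the assumption $\nabla^2 f(x)=\metric$ I would then obtain
\begin{equation*}
    \nabla^2 h(x) = \bigl(r\cot(r)-1\bigr)\metric + \bigl(1-r\cot(r)\bigr)\,\de r\otimes\de r = \bigl(1-r\cot(r)\bigr)\bigl(\de r\otimes\de r - \metric\bigr).
\end{equation*}

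The key observation is then purely algebraic. For $r\in(0,\pi)$ one has $r\cot(r)<1$ (the power series starts $1-r^2/3-\cdots$), so the scalar factor $1-r\cot(r)$ is strictly positive. On the other hand, since $|\nabla r|=1$, the symmetric tensor $\de r\otimes\de r - \metric$ is negative semidefinite and vanishes only in the direction of $\nabla r(x)$; taking any unit tangent vector $v$ at $x$ orthogonal to $\nabla r(x)$ (which exists because $\dim M=2\ge 2$) gives $(\de r\otimes\de r - \metric)(v,v)=-1$. Thus $\nabla^2 h(x)(v,v)=-(1-r\cot(r))<0$.

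This contradicts the fact, stemming from $x\in\argmin h$, that $\nabla^2 h(x)$ must be positive semidefinite, so $f$ cannot be $c$-concave. There is no real obstacle here: all the analytic work is packed into \cref{prop:c_conc_necessary} and \cref{prop:hessian_identity}. The only care required is to make sure the distance $d(x^*,x)$ lies in the regime $(0,\pi)$ where the identity of \cref{prop:hessian_identity} is applicable, which is precisely what the smallness of $\norm{f}_\infty$ buys us.
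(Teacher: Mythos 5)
Your proposal is correct and follows essentially the same route as the paper: assume $c$-concavity, invoke \cref{prop:c_conc_necessary} to get $x\in\argmin h$ with $x^*=\exp_x(-\nabla f(x))\ne x$, and contradict the second-order minimality condition via \cref{prop:hessian_identity}. The only difference is that you spell out the computation the paper leaves as ``easy to see'' (namely that $\nabla^2 h(x)=(1-r\cot r)(\de r\otimes\de r-\metric)$ is strictly negative on directions orthogonal to $\nabla r$), which is a welcome addition rather than a deviation.
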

\begin{proof}
    Let us assume that $f$ is $c$-concave. Hence we know from \cref{prop:c_conc_necessary} that the function $y\mapsto\frac12 d^2(x^*, y)-f(y)$ has a global minimum at $y=x$. Hence it must hold that the Hessian of that function is positive semi-definite at $x$. Therefore it must hold
    \begin{equation*}
        \restricts{\nabla^2\left(\frac12 d^2(x^*, y)\right)}{y=x}\ge\metric \fullstop
    \end{equation*}
    However, thanks to \cref{prop:hessian_identity}, it is easy to see that such inequality does not hold if $x^*\not=x$ and that shows the contradiction as $x^*=\exp_x(-\nabla f)$ and $\nabla f(x)\not = 0$.
\end{proof}
It remains to build a function $f:\S^2\to\R$ such that:
\begin{itemize}
 \item $\norm{f}_\infty$ is arbitrarily small.
 \item $\nabla f(N)\not=0$, where $N$ is the north pole of the sphere.
 \item $\nabla^2 f(N)=\metric$. 
 \item Everywhere it holds $\nabla^2 f\le\metric$.
\end{itemize}
Such a function would be a counterexample to the ``na\"{i}ve theorem'' stated in the introduction as it could not be $c$-concave thanks to \cref{prop:counterexample_property}.
We construct our example as a linear combination $f=f_1+\eps f_2$ where $\eps>0$ is a sufficiently small constant and $f_1, f_2$ are such that:
\begin{itemize}
 \item $\norm{f_1}_\infty$ is arbitrarily small.
 \item $\nabla^2 f_1\le\metric$ with equality only in $N$.
 \item $\nabla f_1(N) = 0$.
 \item $\nabla f_2(N) \not=0$, $\nabla^2 f_2(N)= 0$ and $\nabla^2 f_2\le 0$ in a neighbourhood of $N$.
\end{itemize}
It is obvious that if $\eps$ is sufficiently small then $f=f_1+\eps f_2$ satisfies all our requirements.

We are left to prove the existence of $f_1$ and $f_2$ with the said properties.
What we ask on $f_2$ is almost nothing and therefore we leave it to the reader to convince himself that such a function exists.
A good choice for $f_1$ is given by $y\mapsto\rho\left(\frac12d^2(N, y)\right)$ where $\rho:\R\to\R$ satisfies $\rho(t)=t$ in a neighbourhood of $0$ and $\rho$ becomes constant as soon as $t>\eps$ for a certain $\eps>0$. We are not going to perform the computations, but we remark that they are pretty easy exploiting once again that the inequality given by the Hessian comparison is an identity on the sphere.

\bibliographystyle{siam}

\bibliography{biblio}

\end{document}